\newtheorem{theorem}{Theorem}[section]
\newtheorem{lemma}[theorem]{Lemma}
\newtheorem{corollary}[theorem]{Corollary}
\newtheorem{rem}[theorem]{Remark}
\newcommand{\argmin}{\operatornamewithlimits{\text{argmin}}}
\def \< {\langle}
\def \> {\rangle}
\def \im {{\rm Im}}
\DeclareMathOperator{\rank}{rank}
\DeclareMathOperator{\nnz}{nnz}
\begin{document}

\title{Randomized LU Decomposition Using Sparse Projections}

\author
{Yariv Aizenbud${^1}$~~Gil Shabat${^2}$~~Amir Averbuch${^2}$\\
${^1}$School of Applied Mathematics, Tel Aviv University, Israel\\
${^2}$School of Computer Science, Tel Aviv University, Israel
}

\maketitle

\begin{abstract}
    A fast algorithm for the approximation of a low rank LU decomposition is presented. In order to achieve a low complexity, the algorithm uses sparse random projections combined with FFT-based random projections. The asymptotic approximation error of the algorithm is analyzed and a theoretical error bound is presented. Finally, numerical examples illustrate that for a similar approximation error, the sparse LU algorithm is faster than recent state-of-the-art methods. The algorithm is completely parallelizable that enables to run on a GPU. The performance is tested on a GPU card, showing a significant improvement in the running time in comparison to sequential execution.
\end{abstract}

\smallskip
\noindent \textbf{Keywords.} LU decomposition, random matrices, sparse matrices, sparse Johnson-Lindenstrauss transform.

\section{Introduction}
Low-rank matrix  approximations are  a key component for efficient
processing, manipulating and analysis of big datasets. Often, data
matrices can be very large and yet have many redundancies and
dependencies between rows and columns that result in being a low-rank matrix. Finding a low-rank approximation of a matrix enables us to
process the entire matrix by using only a small set of vectors.
Applications that utilize low-rank matrix approximations include
data compression, noise filtering, principle component analysis
and kernel methods, to name some. Although a low-rank matrix
approximation can be computed using well-known matrix decomposition
methods, such as singular value decomposition (SVD) or rank
revealing QR (RRQR), very often this is impractical due to high
computational load. Therefore, there is an ongoing interest in the development of fast algorithms
for computing low-rank matrix approximations. Randomized algorithms
for low rank matrix approximations include SVD
\cite{halko2011finding,WLRT,randecomp}, LU
\cite{shabat2013randomized}, CUR
\cite{drineas2006fast,drineas2008relative}, principal component
analysis (PCA) \cite{szlam2014implementation,halko2011algorithm}, to
name some. Randomized algorithms have gained an increasing
popularity because of their abilities to perform matrix computations
faster and on larger data sets than classical algorithms such as
\cite{golub2012matrix}. 

Sparse random projections have been studied
for dimensionality reduction as a sparse variant of the
Johnson-Lindenstrauss (JL) transform. A nearly tight lower bound for
several dimensionality reduction linear maps for a predetermined 
sparsity is given in \cite{nelson2013sparsity,nelson2014lower}.

Algorithms, which utilize sparse random projections for SVD and
regression computations, are given in
\cite{clarkson2013low,kane2014sparser,nelson2013osnap,achlioptas2007fast}. Algorithms that are based on sparse dimensionality reduction transforms benefit from the fact that their projection step is more computationally efficient than those that use dense matrices in their projection step.
While the complexity of the algorithms, which use a structured JL transform such as FFT-based random projections \cite{WLRT}, does not change when applied to sparse matrices, algorithms that are based on sparse random projections are accelerated when applied to sparse matrices.

In this paper, the randomized LU algorithms
\cite{shabat2013randomized} are extended by utilizing sparse random projections. We introduce an LU decomposition algorithm that uses sparse random projections combined with the fast Johnson-Lindenstrauss (FJL) transform. FJL transforms are based on the fast Fourier transform (FFT) \cite{AC} and are also used in
\cite{WLRT}. This combination of sparse JL with FJL was introduced in \cite{clarkson2013low} to produce faster
algorithms. The algorithm presented in this paper is shown to be significantly
faster for a low-rank matrix decomposition than the algorithms mentioned above.
In addition, a detailed theoretical analysis is presented for the
derived error bounds of the algorithm.

For a given matrix $A$ of size $m \times n$, the algorithm computes
the lower and upper triangular matrices $L$ and $U$ of sizes $m
\times k$ and $k \times n$, respectively, and permutation matrices
$P$ and $Q$ such that with high probability
\begin{equation}
\|LU - PAQ\|_F \leq \mathcal{O}(\Delta_r)
\end{equation}
where $\Delta_r\stackrel{\Delta}{=}\sqrt{\sum_{i=r+1}^{\min (m,n)}
\sigma_i^2}$, $r<k$.
Then, the performance of the algorithm is compared with the current
state-of-the-art methods that compute low-rank matrix
approximations. The presented algorithm is parallelizable and can be
fully implemented on a GPU.

The paper is organized as follows: Section \ref{sec:preliminaries}
reviews some mathematical results that are needed for the
development of the sparse randomized LU algorithm. Section
\ref{sec:sparserandlu} presents the sparse randomized LU algorithm
and the error bound resulted from the approximation. Section
\ref{sec:numerical_results} presents numerical results for the
approximation  error and for the running time of the sparse
randomized LU with comparison to other algorithms.

\section{Preliminaries}
\label{sec:preliminaries} This section presents the mathematical
background needed in the rest of the paper.  More specifically, we
review the properties of the Sub-sampled Random Fourier Transform
(SRFT) matrices and the sparse embedding matrices. Throughout the
paper, $\Vert \cdot \Vert_F$ denotes the Frobenius norm, $\Vert
\cdot \Vert_2$ denotes the spectral norm when the argument is a
matrix or the $l_2$ (Euclidean) norm for vector arguments. $M_{m
\times n}$ is the set of $m \times n$ matrices, $\sigma_i(\cdot)$ is
the $i$th largest singular value of a matrix, and
$\Delta_k(\cdot)=\sqrt{\sum_{i=k+1}^{\min(m,n)}\sigma_i^2}$ ,
$k=0,\ldots,\min(m,n)-1$.

\subsection{The SRFT matrix} \label{subsec:srft}
The SRFT matrix, which is presented in \cite{AC,WLRT}, is a random matrix
denoted by $\Pi$. It is decomposed into $\Pi = DFS$ where $D$ is an
$n \times n$ diagonal matrix whose entries are i.i.d. random
variables drawn from a uniform distribution on the unit circle in
$\mathbb{C}$, $F$ is an $n\times n$ discrete Fourier transform such
that $F_{jk}= \frac{1}{\sqrt{n}}e^{-2\pi i
    (j-1)(k-1)/n}$, $j,k = 1, \ldots, n $ and $S$ is an $n\times l$ matrix whose entries are all
zeros except for a single randomly placed 1 in each column.

Lemma \ref{lem:SRFTmult} shows that matrix multiplication by an SRFT
matrix can be done faster in comparison to an arbitrary matrix.
\begin{lemma}[\cite{WLRT}] \label{lem:SRFTmult}
    For any $m \times n$ matrix $A$, let $\Pi$ be the $n \times l$  SRFT matrix. Then, $Y=A\Pi$ can be computed in $\mathcal{O}(mn\log l)$  floating point operations.
\end{lemma}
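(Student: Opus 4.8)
The plan is to exploit the factorization $\Pi = DFS$ and to compute the product $Y = A\Pi = (AD)FS$ from left to right, accounting for the cost of each stage and showing that the Fourier stage dominates at $\mathcal{O}(mn\log l)$. Throughout I keep track of the intermediate sizes so that the per-stage counts are transparent.

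First I would dispose of the diagonal factor. Since $D$ is $n\times n$ and diagonal, forming $B := AD$ merely rescales column $j$ of $A$ by the scalar $D_{jj}$; hence $B$ is again $m\times n$ and is produced in $\mathcal{O}(mn)$ operations. This cost is dominated by the later stage and will be absorbed at the end. Next, the essential observation is that $S$ is a selection matrix: it has exactly one nonzero entry (a $1$) in each column, so postmultiplying by $S$ merely selects $l$ columns of whatever precedes it. Writing $\pi(t)$ for the row index of the $1$ in column $t$ of $S$, the $(i,t)$ entry of $Y$ equals $\sum_{j} B_{ij} F_{j,\pi(t)}$, which is precisely the $\pi(t)$-th coefficient of the length-$n$ discrete Fourier transform of the $i$-th row of $B$. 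Consequently, computing all of $Y$ reduces to evaluating, for each of the $m$ rows of $B$, only the $l$ designated Fourier coefficients indexed by $\pi(1),\dots,\pi(l)$, rather than the full length-$n$ transform.

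The crux of the argument is therefore a subsampled (pruned) FFT: any $l$ prescribed coefficients of the length-$n$ DFT of a single vector can be produced in $\mathcal{O}(n\log l)$ operations, and this is the one place where $\log l$ replaces the naive $\log n$. I would invoke this fact via the folding/decimation technique underlying the pruned FFT, as used in \cite{WLRT}: informally, an $\mathcal{O}(n)$ preprocessing collapses the length-$n$ transform into a short transform whose cost scales with $l$ rather than $n$, so that the unwanted coefficients are never formed. This step is the main obstacle, because the naive route---running a full FFT per row at $\mathcal{O}(n\log n)$ and then discarding all but $l$ outputs---yields only $\mathcal{O}(mn\log n)$; the improvement to $\log l$ hinges entirely on computing the selected coefficients directly.

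Granting the per-row bound $\mathcal{O}(n\log l)$ and summing over the $m$ rows of $B$ gives $\mathcal{O}(mn\log l)$ for the transform stage, into which the $\mathcal{O}(mn)$ cost of the diagonal rescaling is absorbed, completing the estimate. I would finish by remarking that the analysis is insensitive to whether the $l$ selected indices $\pi(1),\dots,\pi(l)$ are distinct, so the presence of repeated selections in $S$ does not affect the count.
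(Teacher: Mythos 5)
Your proposal is correct and matches the standard argument for this result: the paper itself gives no proof (the lemma is imported verbatim from \cite{WLRT}), and the proof in that reference proceeds exactly as you do --- absorb the diagonal factor $D$ in $\mathcal{O}(mn)$ operations, use the selection matrix $S$ to reduce the problem to evaluating $l$ prescribed DFT coefficients of each of the $m$ rows, and invoke the subsampled (pruned) FFT to get $\mathcal{O}(n\log l)$ per row. You correctly identify the pruned FFT as the crux and attribute it appropriately; spelling it out (split each row into $n/l$ interleaved subsequences, take $l$-point FFTs of each at total cost $\mathcal{O}(n\log l)$, then combine with twiddle factors at $\mathcal{O}(n/l)$ per desired output) would make the argument self-contained, but as stated it is faithful to the cited source.
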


\begin{theorem}[Follows from 
    Theorem 1.3 in \cite{tropp2011improved} ] \label{thm:FJLT_SRFT}
    For any $U \in M_{n\times r}$ with orthogonal columns, if $\Pi \in M_{k \times n}$,
    is a randomly chosen SRFT matrix, where $r,k$ and $n$ satisfy
    $4\left[ \sqrt{r} + \sqrt{8\log(rn)}\right]^2 \log r \leq k \leq n$.  Then, with probability of at least $1-\mathcal{O}(r^{-1})$, the largest and the smallest singular values of $\Pi U $ are in $[0.40,1.48]$.
\end{theorem}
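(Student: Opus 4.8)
The plan is to obtain the statement directly from Theorem 1.3 of \cite{tropp2011improved} by matching parameters and checking that the complex SRFT defined here plays the same role as the subsampled randomized Hadamard transform analyzed there. First I would fix the dictionary between the two settings: the rank $r$ here corresponds to the number of columns of the orthonormal matrix, $n$ to the ambient dimension, and $k$ to the number of sampled rows. Writing $\Pi$ as the row-subsampling of the unitary $FD$ with the standard normalization $\sqrt{n/k}$ that makes $\mathbb{E}\,[(\Pi U)^\ast(\Pi U)] = I_r$, the hypothesis $4[\sqrt{r}+\sqrt{8\log(rn)}]^2\log r \le k$ is exactly the sample-size condition in Tropp's theorem, and the conclusion $\sigma_{\min}(\Pi U)\ge 0.40$, $\sigma_{\max}(\Pi U)\le 1.48$ with failure probability $\mathcal{O}(r^{-1})$ is verbatim his numeric guarantee (his $O(1/k)$ failure probability, with his column count $k$ being our $r$). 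The only substantive point to verify is that the argument transfers from the real Hadamard matrix to the complex discrete Fourier matrix $F$.

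That verification rests on two ingredients, both of which I would lift from Tropp's analysis. The first is a flattening (coherence) estimate: because every entry of $F$ has modulus $1/\sqrt{n}$ and $D$ has independent unit-modulus diagonal entries, the matrix $Y := FDU$ has orthonormal columns and, with high probability, uniformly small row norms, $\max_j \|Y_{j,\cdot}\|_2^2 = \mathcal{O}((r+\log(rn))/n)$. This is the step where the randomness of $D$ is essential; it is proved by a Hoeffding-type bound on each entry of $Y$ followed by a union bound over the $n$ rows and $r$ columns, and it is identical for the Hadamard and Fourier cases since it uses only the flatness $|F_{jk}| = 1/\sqrt{n}$.

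The second ingredient is a matrix concentration bound for the subsampling. Since $\Pi$ selects $k$ rows of $Y$ uniformly, the Gram matrix $(\Pi U)^\ast(\Pi U) = (n/k)\sum_{j\in\text{sample}} Y_{j,\cdot}^\ast Y_{j,\cdot}$ is a sum of independent rank-one terms whose expectation is $I_r$, because $\sum_j Y_{j,\cdot}^\ast Y_{j,\cdot} = Y^\ast Y = I_r$. Conditioned on the coherence bound above, I would apply a matrix Chernoff inequality to confine the eigenvalues of this Gram matrix to a narrow interval around $1$ once $k$ exceeds the stated threshold; taking square roots converts the eigenvalue interval into the singular-value interval $[0.40,1.48]$.

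The main obstacle I anticipate is bookkeeping rather than conceptual: one must track the normalization $\sqrt{n/k}$ consistently and confirm that Tropp's constants, derived for a real orthogonal transform, are not degraded by passing to the complex unitary $F$. This amounts to checking that the scalar and matrix concentration inequalities used hold over $\mathbb{C}$ with the same constants, which they do. Once the dictionary and the two lemmas are in place, the conclusion is immediate.
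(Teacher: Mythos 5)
Your proposal is correct and takes the same route as the paper: the paper gives no proof at all, justifying the theorem solely by the citation to Theorem 1.3 of \cite{tropp2011improved} under the dictionary $r \leftrightarrow k$, $k \leftrightarrow \ell$ (number of orthonormal columns and number of sampled rows, respectively), which is precisely your parameter matching. Your two additional checks --- the $\sqrt{n/k}$ normalization that the paper's definition of the SRFT omits, and the transfer of Tropp's flattening and matrix-Chernoff arguments from the real Hadamard case to the complex Fourier case --- are sound elaborations of what the paper leaves implicit.
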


\subsection{Sparse Embedding Matrices} \label{subsec:SEM}

For a parameter $t \in \mathbb{N}$, consider the random linear map $S = \Phi D$, where $S \in M_{k \times n}$, such that for $h : \{1,\ldots, n\} \to \{1,\ldots, k\}$,a random map such that for each $i \in \{1,\ldots, n\}$, $h(i) = t'$ for $t' \in \{1,\ldots, k\}$ with probability $1/t$, we have
\begin{enumerate}
    \item $\Phi \in \{0,1\}^{k \times n}$ is a $k \times n$ ($k \le n$) binary matrix with nonzero entries $\Phi_{h(i),i} = 1$ and all the remaining entries equal to $0$. In other words, $\Phi$ is a matrix with a single 1 in each row.
    \item $D$ is an $n\times n$ random diagonal matrix where each diagonal entry  is independently chosen to be $+1$ or $-1$ with equal probability.
\end{enumerate}
 A matrix $S$ that satisfies 1 \& 2 is referred to as a sparse embedding matrix (SEM).
\begin{lemma}
    Let $S \in M_{k\times n}$ be an SEM matrix. Then, $\|S\|_F = \sqrt{n}$.
\end{lemma}
\begin{theorem}\label{thm:sing_vals_S}
    The largest singular value of a $k\times n$ SEM is bounded, with high probability, by $C(n,k) = \sqrt{\frac{n}{k} + \sqrt{2\frac{n}{k} \log k}}$ for large enough $n$.
\end{theorem}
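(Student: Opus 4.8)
The plan is to turn this spectral bound into a statement about the maximum load of a balls-into-bins process. First I would write out $S\transpose{S}$ using the factorization $S=\Phi D$. The $i$-th column of $S$ equals $D_{ii}e_{h(i)}$, so the $(a,b)$ entry of the $k\times k$ matrix $S\transpose{S}$ is $\sum_{i=1}^{n} D_{ii}^2\,\I\{h(i)=a\}\,\I\{h(i)=b\}$. Since $D_{ii}^2=1$ and the two indicators cannot both hold when $a\neq b$, the matrix $S\transpose{S}$ is \emph{diagonal}, with $a$-th diagonal entry $n_a:=\absinline{\{i:h(i)=a\}}$, the number of columns hashed into row $a$. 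Consequently $\sigma_1(S)^2=\norminline{S\transpose{S}}_2=\max_{1\le a\le k} n_a$, and the theorem is equivalent to the bound $\max_a n_a\le \tfrac nk+\sqrt{2\tfrac nk\log k}$.

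Second, I would identify the law of each load. Since each column is hashed independently and uniformly into one of the $k$ rows, each indicator $\I\{h(i)=a\}$ is a Bernoulli$(1/k)$ variable, whence $n_a\sim\mathrm{Binomial}(n,1/k)$ with mean $\mu=n/k$ --- exactly the quantity appearing under the square root of $C(n,k)$ --- and variance $\mu(1-1/k)\le\mu$. The loads are not independent across rows, but the argument only uses the marginal upper tail of a single $n_a$ together with a union bound over the $k$ rows.

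Third, I would apply a Chernoff bound to the upper tail of one binomial load. Optimizing the exponential moment gives $\Prob(n_a\ge \mu+s)\le \exp\!\big(-\mu\,g(s/\mu)\big)$ with rate function $g(x)=(1+x)\log(1+x)-x=\tfrac{x^2}{2}\big(1-\tfrac{x}{3}+\cdots\big)$. Setting the deviation to the critical value $s=\sqrt{2\mu\log k}$ makes $s^2/(2\mu)=\log k$, so to leading order $\Prob(n_a\ge\mu+s)\approx k^{-1}$, and a union bound over the $k$ rows then controls $\max_a n_a$. Since $\mu+s=\tfrac nk+\sqrt{2\tfrac nk\log k}=C(n,k)^2$, this delivers $\sigma_1(S)\le C(n,k)$ with high probability.

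The delicate point --- and the reason for the hypothesis ``for large enough $n$'' --- is that the constant $\sqrt2$ in the deviation is exactly critical. With $s=\sqrt{2\mu\log k}$ the cubic correction in $g$ contributes $\tfrac{\mu x^3}{6}=\tfrac{\sqrt2(\log k)^{3/2}}{3\sqrt\mu}$ to the exponent, so the per-row bound is only $k^{-1}\exp\!\big(O((\log k)^{3/2}/\sqrt\mu)\big)$ and the naive union bound is borderline. The hypothesis ``large enough $n$'' is precisely what forces $x=s/\mu=\sqrt{2\log k/\mu}\to0$, making that correction negligible against the main term $\tfrac{x^2}{2}$; concretely one needs $\mu=n/k$ to dominate the relevant powers of $\log k$. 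The main effort is therefore to make this expansion quantitative --- e.g.\ using $g(x)\ge \tfrac{x^2}{2}-\tfrac{x^3}{6}$ or the equivalent Bernstein estimate $\Prob(n_a\ge\mu+s)\le\exp\!\big(-\tfrac{s^2/2}{\mu+s/3}\big)$ --- and to read the resulting bound as the leading-order (asymptotic) max-load statement, namely the classical fact that throwing $n$ balls into $k$ bins yields a maximum load of $\tfrac nk+\sqrt{2\tfrac nk\log k}\,(1+o(1))$.
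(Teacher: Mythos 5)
Your first two steps are correct, and your reduction is actually cleaner than the paper's: observing that $S\transpose{S}$ is diagonal with entries $n_a$ gives $\sigma_1(S)^2=\max_a n_a$ exactly, recovering in one stroke both Lemma~\ref{lem:norm_of_Pi} and the remark that every singular value of $S$ has this form, whereas the paper argues by optimizing $\|Sv\|_2$ over unit vectors. After this reduction, however, the paper finishes in one line by citing Theorem~1 of \cite{raab1998balls}, which states precisely that the maximum load of $n$ balls in $k$ bins exceeds $\frac nk+\sqrt{2\frac nk\log k}$ with probability $o(1)$ in the heavily loaded regime $n/k\gg(\log k)^3$.

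The gap is in your third step, and it is not the one you flag. At the exact threshold $s=\sqrt{2\mu\log k}$, no exponential-moment bound (Chernoff, Bennett, or Bernstein) combined with a union bound can close the argument: as you yourself compute, the per-row tail bound is at best $k^{-1}\exp\bigl(O((\log k)^{3/2}/\sqrt{\mu})\bigr)$, so after multiplying by $k$ you are left with $\exp(o(1))\to 1$ --- an upper bound on the failure probability that tends to one, hence vacuous. Taking $n$ large enough to kill the cubic correction (i.e.\ $\mu=n/k\gg(\log k)^3$) does not repair this: the loss is not in the cubic term of $g$ but in the polynomial prefactor of the Gaussian tail, which exponential-moment bounds inherently discard. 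What makes the statement true at the critical constant $\sqrt{2}$ is that in this regime the binomial tail obeys the normal approximation \emph{including} its Mills-ratio factor, $\Prob\bigl(n_a\ge\mu+s\bigr)\sim\frac{1}{k\sqrt{4\pi\log k}}$, so the expected number of overloaded rows is $O(1/\sqrt{\log k})=o(1)$ and the first-moment method succeeds; this sharpened analysis is exactly the content of Raab--Steger's theorem, and the condition $n/k\gg(\log k)^3$ is what validates the moderate-deviation asymptotics. Your argument as written proves only the weaker bound $\max_a n_a\le\frac nk+(1+\epsilon)\sqrt{2\frac nk\log k}$ with high probability for any fixed $\epsilon>0$ (equivalently, the $(1+o(1))$ version you quote in your last sentence), which does not yield $C(n,k)$ as stated. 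To repair the proof, either carry out the sharp binomial tail estimate with its subexponential prefactor, or do what the paper does and simply invoke Theorem~1 of \cite{raab1998balls}.
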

The proof of Theorem \ref{thm:sing_vals_S} uses Lemma \ref{lem:norm_of_Pi}:
\begin{lemma}\label{lem:norm_of_Pi}
     The operator norm of an SEM $S \in M_{k\times n}$ is the square root of the maximal number of non-zeros in a row in $S$.
\end{lemma}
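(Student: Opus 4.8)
The plan is to exploit the very special sparsity pattern of $S = \Phi D$ and reduce the computation of the operator norm to reading off the diagonal of the Gram matrix $SS^T$. The key structural fact I would establish first is that each column of $S$ contains exactly one nonzero entry, of magnitude $1$: column $i$ of $\Phi$ has its single $1$ in row $h(i)$, and multiplication by the diagonal $\pm 1$ matrix $D$ only rescales that entry to $D_{ii} = \pm 1$ without moving it. The immediate consequence, which is the crux of the whole argument, is that two distinct rows of $S$ have disjoint column supports: a column $i$ contributes to row $r$ only when $h(i) = r$, and since $h$ assigns $i$ to a single value, no column index can appear in the support of two different rows.

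Next I would compute $G = SS^T \in M_{k \times k}$ entrywise, using $G_{rs} = \sum_{i=1}^{n} S_{ri} S_{si}$. For $r \neq s$ the disjoint-support observation forces every summand to vanish, so $G$ is diagonal. For $r = s$ each nonzero entry of row $r$ equals $\pm 1$ and hence squares to $1$, so $G_{rr} = \sum_{i=1}^{n} S_{ri}^2$ is exactly the number of nonzero entries in the $r$th row of $S$. Writing $d_r$ for that count, we obtain $G = \diag(d_1, \ldots, d_k)$.

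Finally, since $\|S\|_2^2 = \lambda_{\max}(SS^T)$ and $SS^T$ is diagonal, its largest eigenvalue is simply $\max_r d_r$; taking square roots gives $\|S\|_2 = \sqrt{\max_r d_r}$, the square root of the maximal number of nonzeros in a row, as claimed. I do not expect a genuine obstacle here — the entire difficulty is concentrated in recognizing the disjointness of the row supports, after which the proof is a one-line diagonalization. The only point deserving explicit mention is that the $\pm 1$ signs introduced by $D$ become irrelevant once the entries are squared, so the identity holds deterministically for every realization of the random map, not merely with high probability.
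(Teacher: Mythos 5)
Your proof is correct, and it takes a genuinely different route from the paper's. The paper argues variationally: for an arbitrary unit vector $v$ it splits $\|Sv\|_2^2$ into blocks indexed by the row supports $K_j$, maximizes each linear sum $\sum_{i \in K_j} v_i$ subject to the quadratic constraint $\sum_{i \in K_j} v_i^2 = \alpha_j$ (a Cauchy--Schwarz-type step), sums using $\sum_j \alpha_j = 1$ to get $\|Sv\|^2 \leq \kappa_1$, and separately exhibits a unit vector achieving $\sqrt{\kappa_1}$. You instead compute the Gram matrix $SS^T$ exactly: the disjointness of row supports (one nonzero per column) makes it diagonal, the $\pm 1$ entries square to $1$ so the diagonal entries are the row nonzero counts, and the norm is read off as $\sqrt{\lambda_{\max}(SS^T)}$. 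Your route is shorter and arguably tighter in its bookkeeping --- the paper's proof silently drops the $\pm 1$ signs coming from $D$ when writing $\|Sv\|_2^2 = \sum_j \bigl( \sum_{i \in K_j} v_i \bigr)^2$ (harmless, since signs can be absorbed into $v$, but left implicit), and it asserts rather than verifies the existence of the extremal vector. Your approach also yields for free the stronger statement that the paper relegates to a separate remark: since $SS^T = \diag(d_1,\ldots,d_k)$, the full singular value spectrum of $S$ is exactly $\{\sqrt{d_r}\}$, not just its maximum. What the paper's elementary argument buys in exchange is independence from the spectral characterization $\|S\|_2^2 = \lambda_{\max}(SS^T)$, working directly with the definition of the operator norm --- but that is a standard fact, so the trade-off favors your version.
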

\begin{proof}
    Assume, without loss of generality, that there are $\kappa_i$, $i=1,\ldots ,k$ non-zeros in each row,  $\kappa_1 \geq \ldots \geq \kappa_k$.
    Denote the set of non-zero indeces in the $i$th row by $K_i$ ($|K_i| = \kappa_i$, $i = 1, \ldots, k$).
    Since there is only one non-zero in each column, $\sum_{i=1}^k \kappa_i = n$.
    There is a vector $v$ of unit length such that $\|Sv\| =\sqrt{\kappa_1} $.
    Let  $v = (v_1, \ldots, v_n)^T$ be such that $\sum_{i=1}^k v_i^2 = 1$. Then
    $$
    \|Sv\|_2^2 = \sum\limits_{j=1}^k\left( \sum\limits_{i \in K_j}v_{i}\right) ^2.
    $$
    Since $\max\limits_{\sum\limits_{K_j} v_i^2=\alpha_j} \sum v_i$ is achieved when $v_i = \sqrt{\frac{\alpha_j}{\kappa_j}}$ for all $ i \in K_j$, then we have
    $$
        \|Sv\|^2 \leq \sum\limits_{j=1}^k\left( \kappa_j  \sqrt{\frac{\alpha_j}{\kappa_j}} \right) ^2 = \sum\limits_{j=1}^k\left(  \sqrt{\kappa_j \alpha_j} \right) ^2 = \sum\limits_{j=1}^k \kappa_j \alpha_j.
    $$
    Since $\sum \alpha_j = 1 $ it follows that
    $
        \|Sv\|^2 \leq \kappa_1.
    $
    Thus,
    $
    \|Sv\| \leq \sqrt{\kappa_1}.
    $
\end{proof}
\begin{rem}
	In a similar way, one can show that all the singular values of $S$ are of the form $\sqrt{\kappa_i}$.
\end{rem}

\begin{proof}[Proof of Theorem \ref{thm:sing_vals_S}]
    By Lemma \ref{lem:norm_of_Pi}, the norm of $S$ is the square root of the maximal number of non-zeros ($\nnz$) in a row.
    The maximal $\nnz$  in each row is distributed as the maximum of $n$ balls thrown into $k$ urns.
    By Theorem 1 in \cite{raab1998balls}, the probability of the norm to be more than $\frac{n}{k} + \sqrt{2\frac{n}{k} \log k}$ is $o(1)$.
    Thus, the norm is bounded, with high probability, by $\sqrt{\frac{n}{k} + \sqrt{2\frac{n}{k} \log k}}$ for sufficiently large $n$
\end{proof}

\begin{theorem}[Appears 
    as Theorem 3 in \cite{nelson2013osnap} ] \label{thm:FJLT_sp}
    For any $U \in M_{m\times r}$ with orthogonal columns, if $S \in M_{l \times m}$ where $l \geq \delta ^{-1}(r^2+r)/(2\varepsilon - \varepsilon^2)^2$ is a randomly chosen SEM,  then with probability of at least $1-\delta$, the largest and smallest singular values of $SU $ are in the interval $[1-\varepsilon, 1+\varepsilon]$.

\end{theorem}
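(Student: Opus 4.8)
The plan is to reduce the statement about the singular values of $SU$ to a spectral-norm bound on the symmetric matrix $U^T S^T S U - I$, and then to control that norm by a direct second-moment computation that exploits the structure of the SEM. First I would observe that the singular values of $SU$ lie in $[1-\varepsilon,1+\varepsilon]$ precisely when the eigenvalues of $U^T S^T S U$ lie in $[(1-\varepsilon)^2,(1+\varepsilon)^2]$. Since $(1-\varepsilon)^2 = 1-(2\varepsilon-\varepsilon^2)$ and $(1+\varepsilon)^2 = 1+(2\varepsilon+\varepsilon^2)\ge 1+(2\varepsilon-\varepsilon^2)$, it suffices to prove that
\begin{equation}
\|U^T S^T S U - I\|_2 \le 2\varepsilon-\varepsilon^2
\end{equation}
with probability at least $1-\delta$. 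Because $U$ has orthonormal columns, $U^T U = I$, so $U^T S^T S U - I$ measures exactly the failure of $S$ to act isometrically on the column space of $U$.

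Next I would write the SEM explicitly as a signed hashing matrix: each column $j$ of $S$ carries a single nonzero entry $\delta_j\in\{\pm1\}$ in a uniformly random row $h(j)\in\{1,\dots,l\}$, with the $\delta_j$ and $h(j)$ independent. Then $(S^T S)_{jj'}=\delta_j\delta_{j'}\,\mathbbm{1}[h(j)=h(j')]$, which equals $1$ on the diagonal. Substituting into $U^T S^T S U$ and subtracting $U^T U = I$ cancels the diagonal contribution and leaves
\begin{equation}
(U^T S^T S U - I)_{ab} = \sum_{j\neq j'} U_{ja}\,U_{j'b}\,\delta_j\delta_{j'}\,\mathbbm{1}[h(j)=h(j')].
\end{equation}
The key step is then to bound $\E\|U^T S^T S U - I\|_F^2$, since $\|\cdot\|_2\le\|\cdot\|_F$ lets me work with the more tractable Frobenius norm. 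Expanding the square and taking the expectation, the mean-zero independent signs force every term to vanish except those in which the unordered pair $\{j,j'\}$ matches the second index pair, while the collision indicator contributes a factor $\P[h(j)=h(j')]=1/l$.

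Carrying out this computation I expect two surviving sums. After summing over $a,b$, the first collapses to $\tfrac1l\sum_{j\neq j'}\rho_j\rho_{j'}$, where $\rho_j=\sum_a U_{ja}^2$ is the squared norm of row $j$; this is at most $\tfrac1l\big(\sum_j\rho_j\big)^2=\tfrac1l\|U\|_F^4 = r^2/l$. The second collapses to $\tfrac1l\sum_{j\neq j'}\langle u_j,u_{j'}\rangle^2$ over the rows $u_j$ of $U$, bounded by $\tfrac1l\|UU^T\|_F^2=\tfrac1l\tr\big((UU^T)^2\big)=\tfrac1l\tr(UU^T)=r/l$, using $U^T U = I$. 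Combining gives $\E\|U^T S^T S U - I\|_F^2\le (r^2+r)/l$, so Markov's inequality yields
\begin{equation}
\P\!\left[\|U^T S^T S U - I\|_2 > 2\varepsilon-\varepsilon^2\right] \le \frac{r^2+r}{l\,(2\varepsilon-\varepsilon^2)^2} \le \delta,
\end{equation}
the last inequality holding under the hypothesis $l\ge \delta^{-1}(r^2+r)/(2\varepsilon-\varepsilon^2)^2$, which finishes the argument.

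The main obstacle is the second-moment bookkeeping: one must carefully enumerate which of the four-index terms survive the expectation over the random signs and which are annihilated, and then recognize the two residual sums as $\|U\|_F^4$ and $\tr((UU^T)^2)$ respectively. The clean cancellation of the diagonal and the reduction of the second sum via $\tr((UU^T)^2)=\tr(UU^T)=r$ are exactly where the orthonormality of the columns of $U$ is essential; the remaining estimates are routine.
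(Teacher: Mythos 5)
The paper states this theorem without proof, importing it by citation from \cite{nelson2013osnap}, so there is no internal proof to compare against. Your argument is correct and complete: the reduction to $\|U^T S^T S U - I\|_2 \le 2\varepsilon - \varepsilon^2$, the cancellation of the diagonal via $U^TU=I$, the second-moment bound $\E\|U^T S^T S U - I\|_F^2 \le (r^2+r)/l$ (with the two surviving sums bounded by $\|U\|_F^4 = r^2$ and $\tr\bigl((UU^T)^2\bigr) = r$), and the final Markov step exactly account for the hypothesis $l \ge \delta^{-1}(r^2+r)/(2\varepsilon-\varepsilon^2)^2$. This is essentially the same moment-method proof given in the cited source, so your proposal reconstructs the intended argument rather than deviating from it; the only cosmetic issues are the reuse of $\delta$ both for the failure probability and for the random signs, and the implicit use of pairwise independence of the hash values $h(j)$ to get $\P[h(j)=h(j')]=1/l$, which the paper's SEM definition does supply.
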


\begin{corollary} \label{cor:sing_SU}
    Let $\Omega = \Pi S$, where $\Pi \in M_{k \times l}$ is as in Theorem \ref{thm:FJLT_SRFT} and $S \in M_{l \times m}$ as in Theorem \ref{thm:FJLT_sp}. Then,
    for any $U \in M_{m\times r}$, which has orthogonal columns with high probability,
    $\|\Omega U\|_2 \leq 1.48(1+\varepsilon)$ and $\|(\Omega U)^{-1}\|_2 \leq 0.4\frac{1}{(1-\varepsilon)}$.
\end{corollary}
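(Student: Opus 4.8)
The plan is to reduce the composite map $\Omega = \Pi S$ to a product of three factors whose singular values are individually controlled by the two theorems above, and then to chain the bounds multiplicatively. First, since $U$ has orthonormal columns and $S$ is the SEM of Theorem~\ref{thm:FJLT_sp}, with probability at least $1-\delta$ all singular values of $SU \in M_{l\times r}$ lie in $[1-\varepsilon,1+\varepsilon]$; in particular $SU$ has full column rank. I would then take a thin factorization $SU = QR$, where $Q \in M_{l\times r}$ has orthonormal columns and $R \in M_{r\times r}$ is invertible. Because $Q^\ast Q = I_r$, we have $(SU)^\ast(SU) = R^\ast R$, so the singular values of $R$ coincide with those of $SU$ and therefore also lie in $[1-\varepsilon,1+\varepsilon]$.

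Next I would handle the SRFT factor. The matrix $Q$ has orthonormal columns and lives in the $l$-dimensional space on which $\Pi \in M_{k\times l}$ acts, so Theorem~\ref{thm:FJLT_SRFT} (applied with ambient dimension $l$ and with $Q$ playing the role of $U$) yields that, with probability at least $1-\mathcal{O}(r^{-1})$, the singular values of $\Pi Q$ lie in $[0.40,1.48]$. The one point that needs care here is that $Q$ is itself random, being built from $S$, whereas Theorem~\ref{thm:FJLT_SRFT} is a statement about a \emph{fixed} matrix with orthonormal columns. I would resolve this by conditioning on $S$, and hence on $Q$, and invoking the independence of $\Pi$ and $S$: conditionally, $Q$ is a fixed orthonormal-column matrix and the theorem applies verbatim, after which the bound holds unconditionally by averaging. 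This conditioning step is the main obstacle, in that it is the only place where one must argue rather than simply quote a prior result.

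Finally I would assemble the estimate from $\Omega U = \Pi S U = \Pi Q R$. For the upper bound, submultiplicativity of the spectral norm gives $\|\Omega U\|_2 \leq \|\Pi Q\|_2\,\|R\|_2 \leq 1.48(1+\varepsilon)$. For the lower bound on the smallest singular value, for every unit vector $x$ I would write $\|\Pi Q R x\|_2 \geq \sigma_{\min}(\Pi Q)\,\|Rx\|_2 \geq \sigma_{\min}(\Pi Q)\,\sigma_{\min}(R) \geq 0.40(1-\varepsilon)$, using that $\Pi Q$ has full column rank so that $\sigma_{\min}(\Pi Q)$ is a genuine lower bound on $\|\Pi Q\,y\|_2/\|y\|_2$. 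Hence $\sigma_{\min}(\Omega U) \geq 0.40(1-\varepsilon)$ and $\|(\Omega U)^{-1}\|_2 = 1/\sigma_{\min}(\Omega U) \leq 1/\bigl(0.40(1-\varepsilon)\bigr)$, where $(\Omega U)^{-1}$ is understood as the pseudoinverse when $k>r$. Both estimates hold simultaneously with high probability after a union bound over the two events above.
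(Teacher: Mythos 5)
Your proposal is correct, and it supplies exactly the argument the paper leaves implicit: the corollary is stated without any proof, as an immediate consequence of chaining Theorem \ref{thm:FJLT_sp} (applied to $SU$) with Theorem \ref{thm:FJLT_SRFT} (applied to an orthonormal basis of the image of $SU$), which is precisely what your QR factorization $SU = QR$ accomplishes. Your conditioning step --- fixing $S$, using independence of $\Pi$ from $S$ so that Theorem \ref{thm:FJLT_SRFT} applies to the random matrix $Q$, then averaging --- is the right way to close the one genuine gap in the ``quote both theorems'' reduction, and your union bound over the two events is what the paper's ``with high probability'' tacitly assumes.

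One point worth flagging: the bound you derive, $\|(\Omega U)^{-1}\|_2 \leq \frac{1}{0.4(1-\varepsilon)}$, differs from the corollary as literally printed, which says $\|(\Omega U)^{-1}\|_2 \leq 0.4\frac{1}{1-\varepsilon}$. The printed version is a typo: it is impossible, since $\|(\Omega U)^{-1}\|_2 \geq 1/\|\Omega U\|_2 \geq \frac{1}{1.48(1+\varepsilon)} > 0.4\frac{1}{1-\varepsilon}$ for small $\varepsilon$, contradicting the first bound of the same corollary. Your version is the intended one, as confirmed by the paper's own use of the corollary in the proof of the correctness theorem, where the factor $\frac{C(n,k_2)}{0.4(1-\varepsilon)}$ arises from the estimate $\|(\Omega_2 U)^{-1}\|_2 \leq \frac{1}{0.4(1-\varepsilon)}$.
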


\begin{theorem}[Appears as Lemma 46 in \cite{clarkson2013low}] \label{thm:affine_sparse_embed}
    Let $A \in M_{m \times d}$ be of rank $r$, $B \in M_{m \times d'}$,
    and $c = d + d'$. For SEM $S \in M_{l \times m}$ and SRFT matrix $\Pi \in M_{k \times l}$, there exist $l = O(r^2 \log^6(r/\varepsilon) + r\varepsilon^{-1})$
    and $k = O(r\varepsilon^{-1} \log(r/\varepsilon))$ such that for $\Omega =\Pi S$, $\tilde{X} = \argmin_X \|\Omega(AX - B)\|_F$ satisfies $\|A \tilde{X} - B\|_F \leq (1 +
    \varepsilon) \min_X \|AX - B\|_F$ with a fixed non-zero probability. The operator $\Omega$ can be applied in $O(\nnz(A) + \nnz(B) + lc \log l)$ operations.
\end{theorem}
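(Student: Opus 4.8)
The plan is to recognize the statement as a \emph{sketch-and-solve} guarantee for least squares and to prove it through the two structural properties that are classically sufficient (Sarl\'os; Clarkson--Woodruff). Write $U\in M_{m\times r}$ for an orthonormal basis of the column space of $A$, set $X^\star=A^{+}B$ for the exact minimizer, and let $B_\perp=B-AX^\star=(I-UU^{T})B$ be the optimal residual, so that $U^{T}B_\perp=0$ and $\min_X\norm{AX-B}_F=\norm{B_\perp}_F$. It suffices to show that $\Omega=\Pi S$ satisfies (a) a \emph{constant-distortion subspace embedding} of the column space of $A$, namely all singular values of $\Omega U$ lie in a fixed interval bounded away from $0$ --- which is exactly the content of Corollary \ref{cor:sing_SU} --- and (b) an \emph{approximate matrix multiplication} (AMM) bound
\[
\norm{(\Omega U)^{T}(\Omega B_\perp)}_F \;\le\; \sqrt{\varepsilon}\,\norm{B_\perp}_F ,
\]
which, since $\norm{U}_F=\sqrt r$, is an AMM estimate with per-Frobenius relative error $\sqrt{\varepsilon/r}$.

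Given (a) and (b), the $(1+\varepsilon)$ guarantee follows by a short standard computation, which I would carry out as follows. For arbitrary $X$ decompose $AX-B=A(X-X^\star)-B_\perp$; because the two summands are orthogonal in the Frobenius inner product, $\norm{AX-B}_F^2=\norm{A(X-X^\star)}_F^2+\norm{B_\perp}_F^2$. Expanding the sketched objective,
\[
\norm{\Omega(AX-B)}_F^2=\norm{\Omega A(X-X^\star)}_F^2-2\langle \Omega A(X-X^\star),\Omega B_\perp\rangle+\norm{\Omega B_\perp}_F^2 ,
\]
I would bound the first term by (a), the cross term by (b) (writing $A(X-X^\star)=UZ$ and factoring through $\norm{(\Omega U)^{T}\Omega B_\perp}_F$), and control $\norm{\Omega B_\perp}_F$ up to $(1\pm\varepsilon)$ by the same AMM/JL-moment machinery applied to $B_\perp$ with itself. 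Collecting terms shows the sketched objective differs from $\norm{AX-B}_F^2$ by at most $O(\varepsilon)\,\norm{AX-B}_F^2$ uniformly in $X$; comparing its values at the sketched minimizer $\tilde X$ and at $X^\star$ then yields $\norm{A\tilde X-B}_F\le(1+\varepsilon)\min_X\norm{AX-B}_F$ after rescaling $\varepsilon$.

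The core of the proof is therefore establishing (a) and (b) for the two-stage operator $\Omega=\Pi S$, which I would do by conditioning on the inner SEM stage. First, take $S\in M_{l\times m}$ so that $S$ is a constant-distortion subspace embedding of the column space of $A$ and simultaneously satisfies AMM with per-Frobenius error $\sqrt{\varepsilon/r}$ (the latter is what preserves $\norm{B_\perp}_F$ and kills the cross term): the subspace-embedding requirement forces $l=O(r^2\log^6(r/\varepsilon))$ for a sparse one-nonzero-per-column map (the OSNAP/SEM analysis, sharper than a second-moment bound), while the AMM requirement forces $l=O(r\varepsilon^{-1})$, giving the stated $l=O(r^2\log^6(r/\varepsilon)+r\varepsilon^{-1})$. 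Conditioned on $S$ being good, $SA$ has column space of dimension at most $r$ inside $\R^{l}$, and I apply the SRFT $\Pi\in M_{k\times l}$ of Theorem \ref{thm:FJLT_SRFT} to the pair $(SA,SB)$: the subspace-embedding part needs $k=\Omega(r\log r)$ and the AMM part (again with error $\sqrt{\varepsilon/r}$) needs $k=O(r\varepsilon^{-1}\log(r/\varepsilon))$, the dominant term. Since both stages are constant-distortion embeddings with small AMM error, their composition is again a constant-distortion embedding (multiply the singular-value intervals, as in Corollary \ref{cor:sing_SU}) with AMM error controlled by the triangle inequality across the two stages; intersecting two constant-probability success events keeps the overall probability nonzero.

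The complexity claim is immediate from the factorization: $S$ has a single nonzero per column, so forming $SA$ and $SB$ costs $O(\nnz(A)+\nnz(B))$, after which $\Pi$ is applied to the $l\times c$ matrices $SA,SB$ in $O(lc\log l)$ operations by Lemma \ref{lem:SRFTmult}, for a total of $O(\nnz(A)+\nnz(B)+lc\log l)$. I expect the main obstacle to be part (b) for the composition: one must prove the AMM bound for the SEM and for the SRFT at the \emph{relative} error $\sqrt{\varepsilon/r}$, carefully track the $\norm{U}_F=\sqrt r$ factor so the cross term is measured against $\norm{B_\perp}_F=\min_X\norm{AX-B}_F$ rather than a crude $\sqrt{mn}$ scale, and verify that conditioning on the first stage does not inflate the distortion of the second. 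Obtaining the precise dimension $l=O(r^2\log^6(r/\varepsilon)+r\varepsilon^{-1})$ --- rather than the weaker $O(r^2/\varepsilon^2)$ --- is where the delicate sparse-embedding moment analysis, as opposed to the elementary bounds of Lemma \ref{lem:norm_of_Pi} and Theorem \ref{thm:sing_vals_S}, is genuinely needed.
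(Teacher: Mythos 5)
You should first be aware that the paper contains no proof of this statement: it is imported verbatim as Lemma 46 of \cite{clarkson2013low}, so your attempt can only be judged against the standard argument in that reference. Measured that way, your overall architecture is the right one: reduce the claim to (a) a constant-distortion subspace embedding of the column space of $A$ (Corollary \ref{cor:sing_SU}) and (b) an approximate-matrix-multiplication (AMM) bound $\norm{(\Omega U)^T \Omega B_\perp}_F \le \sqrt{\varepsilon}\,\norm{B_\perp}_F$; establish both for $\Omega = \Pi S$ by conditioning on the SEM stage and then invoking the SRFT guarantees on the sketched $r$-dimensional subspace; and read off the running time from the one-nonzero-per-column structure of $S$ together with Lemma \ref{lem:SRFTmult}. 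This is exactly the two-property (subspace embedding plus AMM) framework used in the cited source, and the complexity paragraph is correct.

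However, the step in which you combine (a) and (b) contains a genuine error. You assert that (a) and (b) make the sketched objective $\norm{\Omega(AX-B)}_F^2$ agree with $\norm{AX-B}_F^2$ up to a factor $1+O(\varepsilon)$ \emph{uniformly in $X$}. That is false: property (a) only places the singular values of $\Omega U$ in a constant interval (here $[0.4(1-\varepsilon),\,1.48(1+\varepsilon)]$), so the term $\norm{\Omega A(X-X^\star)}_F^2$ is distorted by constant factors (roughly between $0.16$ and $2.2$), not by $1 \pm O(\varepsilon)$; a uniform $(1\pm\varepsilon)$ statement would require a genuine $(1\pm\varepsilon)$-subspace embedding, i.e.\ much larger $k$ and $l$, which is precisely what this framework is designed to avoid. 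The correct ``short standard computation'' instead uses optimality of $\tilde X$ for the sketched problem: from $\norm{\Omega(A\tilde X - B)}_F^2 \le \norm{\Omega(AX^\star-B)}_F^2 = \norm{\Omega B_\perp}_F^2$, write $A(\tilde X - X^\star) = UZ$ and expand the left side; the $\norm{\Omega B_\perp}_F^2$ terms cancel, leaving $\sigma_{\min}^2(\Omega U)\norm{Z}_F^2 \le \norm{\Omega U Z}_F^2 \le 2\langle Z, (\Omega U)^T \Omega B_\perp\rangle \le 2\sqrt{\varepsilon}\,\norm{Z}_F \norm{B_\perp}_F$, hence $\norm{Z}_F = O(\sqrt{\varepsilon})\norm{B_\perp}_F$, and Pythagoras gives $\norm{A\tilde X - B}_F^2 = \norm{Z}_F^2 + \norm{B_\perp}_F^2 \le (1+O(\varepsilon))\norm{B_\perp}_F^2$. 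Note two consequences of the corrected argument: the constant distortion enters only through the lower bound on $\sigma_{\min}(\Omega U)$, and no $(1\pm\varepsilon)$ control of $\norm{\Omega B_\perp}_F$ is needed at all --- which is fortunate, since an SEM with the stated $l$ only controls that quantity up to constant factors with constant probability. With this replacement your plan is sound; the remaining unproved ingredient is the AMM property itself for the SEM and SRFT stages at relative error $\sqrt{\varepsilon/r}$, which you correctly identify as the technical core but do not supply.
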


An improved bound appears in \cite{nelson2013osnap} and is shown to be near optimal in \cite{nelson2013sparsity}.

\section{Sparse Randomized LU}
\label{sec:sparserandlu} Similarly to the work presented in
\cite{randecomp, halko2011finding}, the key idea in the current
algorithm is that the image of $AS$ for a randomly chosen SEM $S$ is
``close'' to the image of $A$ up to an error of order $\Delta_r$. It
is shown in \cite{halko2011finding} that for each $r$ there is $k>r$
such that if $S$ is a random matrix of size $n \times k$ generated
from the set of Gaussian i.i.d. matrices, or from SRFT matrices,
then with high probability the image of $AS$ is close to the image
of $A$. More rigorously, if we denote by $Q$ an $n \times k$ matrix
with orthonormal columns that has the same image as $AS$, which is
calculated by the QR algorithm, then $\|A-QQ^*A\|_F \le
\mathcal{O}(\Delta_r)$. We show in Theorem \ref{thm:A_QQA} that this
is also true for the set of random SEM:
\begin{theorem} \label{thm:A_QQA}
    Let $A$ be an $m \times n$ matrix. Assume that $l = O(r^2 \log^6(r/\varepsilon) + r\varepsilon^{-1})$, $k = O(r\varepsilon^{-1} \log(r/\varepsilon))$,
    $\Pi \in M_{k \times l}$ is an SRFT matrix and $S \in M_{l \times n}$ is an SEM. Let $\Omega = \Pi S$ and the QR decomposition of $A\Omega^*$ is denoted by $QR$. Then, $\|A-QQ^*A\|_F<(1+\varepsilon)\Delta_r(A)$.
\end{theorem}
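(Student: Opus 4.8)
The plan is to reduce the statement to the affine-embedding guarantee of Theorem \ref{thm:affine_sparse_embed}. First I would record the elementary fact that $QQ^*$ is the orthogonal projector onto $\mathrm{range}(A\Omega^*)$, so $QQ^*A$ is the Frobenius-best approximation of $A$ among all matrices whose columns lie in $\mathrm{range}(A\Omega^*)$. Since every such matrix has the form $A\Omega^* X$ and, conversely, every $A\Omega^* X$ has its columns in that subspace,
\begin{equation*}
\|A - QQ^*A\|_F = \min_{X} \|A\Omega^* X - A\|_F .
\end{equation*}
Thus it suffices to exhibit a single admissible $X_0$ with $\|A\Omega^* X_0 - A\|_F \le (1+\varepsilon)\Delta_r(A)$.

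Next I would pass to the transpose so that $\Omega$ acts on the left, matching the format of Theorem \ref{thm:affine_sparse_embed}. Writing the SVD $A = U\Sigma V^*$ and letting $V_1 \in M_{n\times r}$ be the leading $r$ right singular vectors, the rank-$r$ design matrix $V_1$ (orthonormal columns) and the target $A^*$ share the row dimension $n$ that $\Omega \in M_{k\times n}$ sketches. The unsketched optimum is
\begin{equation*}
\min_{X} \|V_1 X - A^*\|_F = \|A^* - V_1 V_1^* A^*\|_F = \|A - A V_1 V_1^*\|_F = \Delta_r(A),
\end{equation*}
since $A V_1 V_1^*$ is the best rank-$r$ approximation of $A$. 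Applying Theorem \ref{thm:affine_sparse_embed} with this $V_1$, this $A^*$, and the sketch $\Omega = \Pi S$ (the parameters $l,k$ are exactly those hypothesized), the sketched minimizer $\tilde X = \argmin_X \|\Omega V_1 X - \Omega A^*\|_F$ satisfies $\|V_1 \tilde X - A^*\|_F \le (1+\varepsilon)\Delta_r(A)$, and transposing gives $\|A - \tilde X^* V_1^*\|_F \le (1+\varepsilon)\Delta_r(A)$.

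The crux is to check that $\tilde X^* V_1^*$ is feasible for the minimization of the first paragraph, i.e. that its columns lie in $\mathrm{range}(A\Omega^*)$. Here I would use that $\Omega V_1 \in M_{k\times r}$ has full column rank — guaranteed by the subspace-embedding bound of Corollary \ref{cor:sing_SU} — so the least-squares minimizer is $\tilde X = (\Omega V_1)^\dagger \Omega A^*$, whence
\begin{equation*}
\tilde X^* = A\Omega^* \big[(\Omega V_1)^\dagger\big]^* .
\end{equation*}
Consequently $\tilde X^* V_1^* = A\Omega^* X_0$ with $X_0 = [(\Omega V_1)^\dagger]^* V_1^*$, so it is exactly of the admissible form $A\Omega^* X$. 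Combining the three displays,
\begin{equation*}
\|A - QQ^*A\|_F = \min_X \|A\Omega^* X - A\|_F \le \|A\Omega^* X_0 - A\|_F = \|A - \tilde X^* V_1^*\|_F \le (1+\varepsilon)\Delta_r(A).
\end{equation*}
I expect the main obstacle to be precisely this last structural observation: one must verify that the affine-embedding minimizer, once transposed, keeps its column space inside $\mathrm{range}(A\Omega^*)$, which is what lets the $(1+\varepsilon)$ regression guarantee transfer into a bound on the projection error. The remaining points — the full rank and conditioning of $\Omega V_1$ via Corollary \ref{cor:sing_SU}, and the transpose bookkeeping — are routine.
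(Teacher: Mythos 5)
Your proof is correct and follows essentially the same route as the paper's: both reduce $\|A-QQ^*A\|_F$ to the sketched-regression guarantee of Theorem \ref{thm:affine_sparse_embed} applied with a rank-$r$ design spanning the top right singular subspace (the paper uses $A_r$ where you use $V_1$ --- the same subspace in a different basis), and both exploit that the sketched least-squares solution has the form $A\Omega^*(\cdot)$ before concluding by optimality of the orthogonal projection onto $\mathrm{range}(Q)$. The only cosmetic difference is your appeal to Corollary \ref{cor:sing_SU} for full column rank of $\Omega V_1$, which the paper sidesteps by writing its minimizer directly as $A\Omega^*(A_r\Omega^*)^\dagger$.
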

The proof Theorem \ref{thm:A_QQA} uses ideas similar to some in \cite{clarkson2013low}.
\begin{proof}
    First, we show that $\min\limits_{\rank X=r} \|QX - A\|_F \leq (1 + \varepsilon) \Delta_r$. Assume $A_r$ is the best rank $r$ approximation of $A$. Then, directly from this assumption, it follows that $
    \min\limits_Y\|YA_r - A\|_F = \|A_r - A\|_F = \Delta_r
    $.
    From Theorem \ref{thm:affine_sparse_embed} follows that if
    $\tilde{Y} = \argmin \|(YA_r-A)\Omega^*\|_F$, then
    $$
    \|\tilde{Y}A_r-A\|_F \leq  (1 + \varepsilon)  \min\limits_Y\|YA_r - A\|_F =  (1 + \varepsilon) \Delta_r.
    $$
    Note that
    $$
    \argmin \|(YA_r-A)\Omega^*\|_F = \argmin \|YA_r\Omega^*-A\Omega^*\|_F = A\Omega^*(A_r\Omega^*)^\dagger.
    $$
    Thus,
    \begin{equation}\label{eq:ARsomthing_A}
    \|A\Omega^*(A_r\Omega^*)^\dagger A_r-A\|_F \leq (1 + \varepsilon) \Delta_r.
    \end{equation}
    From Eq. \eqref{eq:ARsomthing_A} it follows that
    $$
    \min\limits_{\rank X=r} \|A\Omega^*X - A\|_F \leq (1 + \varepsilon)\Delta_r \mbox{, where } X \in M_{k \times n}.
    $$
    By using the fact that
    $$
    \min\limits_{X s.t. \rank X=r} \|QX - A\|_F \leq \min\limits_{X s.t. \rank X=r}\|A\Omega^*X - A\|_F
    $$
    we get
    \begin{equation}\label{eq:minQX_A}
    \min\limits_{X s.t. \rank X=r} \|QX - A\|_F \leq (1 + \varepsilon) \Delta_r.
    \end{equation}

    It follows that $\|QQ^*A - A\|_F \leq \min\limits_{X s.t. \rank X=r} \|QX - A\|_F $, which concludes the proof.
\end{proof}

Theorem \ref{thm:A_QQA} shows that $QQ^*A$ approximates  $A$ well.
Since $Q$ and $Q^*A$ are relatively small matrices and since $Q$
has orthogonal columns, then the SVD computation of $Q^*A$ is faster than
the SVD computation of $A$. Unfortunately, $Q^*$ is a dense
matrix, then the multiplication $Q^*A$ is computationally expensive. We
now show how to replace the computation of $Q^*A$ with a
multiplication of $A$ by a sparse matrix without affecting the
accuracy too much.
\begin{corollary}\label{col:A_LLA}
    Let $A$ be a $m \times n$ matrix. Assume  $l = O(r^2 \log^6(r/\varepsilon) + r\varepsilon^{-1})$, $k = O(r\varepsilon^{-1} \log(r/\varepsilon))$,
    $\Pi \in M_{k \times l}$ is an SRFT matrix and an SEM $S \in M_{l \times n}$. Denote $\Omega = \Pi S$ and the pivoted LU decomposition of $A\Omega^*$ is denoted by $PA\Omega^* = LU$. Then $\|PA-LL^\dagger PA\|_F<(1+\varepsilon)\Delta_r(A)$.
\end{corollary}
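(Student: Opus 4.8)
The plan is to reduce Corollary \ref{col:A_LLA} to Theorem \ref{thm:A_QQA} by recognizing $LL^\dagger$ as an orthogonal projection and relating its range to $\im(Q)$. First I would recall that for any matrix $L$ the product $LL^\dagger$ is exactly the orthogonal projection onto the column space $\im(L)$. Hence $\|PA - LL^\dagger PA\|_F$ is the Frobenius-norm error of approximating the columns of $PA$ by their orthogonal projections onto $\im(L)$, and the task is to compare this error with the one controlled by Theorem \ref{thm:A_QQA}.

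Second I would pin down $\im(L)$. Since $PA\Omega^* = LU$, every column of $PA\Omega^*$ is a linear combination of columns of $L$, so $\im(PA\Omega^*) = \im(LU) \subseteq \im(L)$. Because $P$ is a permutation (hence orthogonal) matrix, $\im(PA\Omega^*) = P\,\im(A\Omega^*) = P\,\im(Q)$, where $Q$ is the factor from the QR decomposition of $A\Omega^*$ used in Theorem \ref{thm:A_QQA} and spans $\im(A\Omega^*)$. Thus $P\,\im(Q) \subseteq \im(L)$.

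Third comes the key monotonicity observation: projecting onto a larger subspace can only decrease the approximation error. Concretely, if $P_1$ denotes the orthogonal projection onto $P\,\im(Q)$ and $LL^\dagger$ projects onto the larger $\im(L)$, then $\|PA - LL^\dagger PA\|_F \leq \|PA - P_1 PA\|_F$ (column-wise, the orthogonal projection onto a subspace is the nearest point in that subspace, and the projection of each column onto $P\,\im(Q)$ already lies in the larger space $\im(L)$). The projection onto $P\,\im(Q)$ is $P_1 = P\,QQ^*P^T$, since conjugating the projection $QQ^*$ by the orthogonal matrix $P$ yields the projection onto $P\,\im(Q)$. Therefore $P_1 PA = P\,QQ^*P^T P A = P\,QQ^* A$, and using that $P$ preserves the Frobenius norm, $\|PA - P_1 PA\|_F = \|P(A - QQ^*A)\|_F = \|A - QQ^*A\|_F$. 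Combining this with Theorem \ref{thm:A_QQA} gives $\|PA - LL^\dagger PA\|_F \leq \|A - QQ^*A\|_F < (1+\varepsilon)\Delta_r(A)$.

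The main obstacle is the step identifying $\im(L)$ with (or containing) $P\,\im(Q)$, where one must reason about the shapes and ranks in the pivoted LU factorization of the $m \times k$ matrix $A\Omega^*$. If one insisted on the equality $\im(L) = \im(PA\Omega^*)$, one would need $U$ to have full row rank, which in turn requires $A\Omega^*$ to attain the expected rank. The monotonicity argument sidesteps this entirely, since only the inclusion $\im(PA\Omega^*) \subseteq \im(L)$ is needed, and it holds unconditionally; projecting onto the possibly larger $\im(L)$ only improves the error. I would therefore present the inclusion-plus-monotonicity route to keep the argument free of any full-rank assumption on $A\Omega^*$.
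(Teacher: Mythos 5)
Your overall strategy---reducing the corollary to Theorem \ref{thm:A_QQA} via the relation between $\im(L)$ and $\im(Q)$---is the same one the paper uses (its entire proof is the remark that ``the same proof works because $\im\, L = \im\, Q$''), and your monotonicity step is sound: projecting the columns of $PA$ onto a larger subspace never increases the Frobenius error. The gap is the link $\im(A\Omega^*) = \im(Q)$, on which your required inclusion $P\,\im(Q) \subseteq \im(L)$ rests. The thin QR factor $Q$ of the $m\times k$ matrix $A\Omega^*$ has exactly $k$ orthonormal columns, so $\im(Q) = \im(A\Omega^*)$ holds \emph{only if} $A\Omega^*$ has full column rank; otherwise $\im(Q) \supsetneq \im(A\Omega^*)$, and the $k - \rank(A\Omega^*)$ surplus directions of $\im(Q)$ are an arbitrary completion chosen by the QR routine, while the surplus directions of $\im(L)$ are chosen by the LU routine---there is no reason for the former, permuted by $P$, to lie inside the latter. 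This is not an exotic corner case: since $\rank(A\Omega^*) \le \rank(A)$, every input with $r < \rank(A) < k$ (for which $\Delta_r(A) > 0$, so the corollary is still meaningful) is necessarily rank-deficient here. So your closing claim that the argument is ``free of any full-rank assumption'' is precisely where it fails: you removed the full-rank requirement from $U$ only to reimpose it, silently, on $R$. (The paper's own one-line justification has the same looseness---$\im L = \im Q$ also needs full rank and ignores $P$---but its instruction to repeat the theorem's proof is sound, as follows.)

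The genuinely assumption-free route is to use not the \emph{statement} of Theorem \ref{thm:A_QQA} but the intermediate bound its proof establishes, namely $\min_{\rank X = r}\|A\Omega^* X - A\|_F \le (1+\varepsilon)\Delta_r(A)$, and re-run the final step with $(L, PA)$ in place of $(Q, A)$. Since $PA\Omega^* = LU$, every matrix $PA\Omega^* X$ has the form $LY$ with $Y = UX$, and $LL^\dagger PA$ solves the unconstrained least-squares problem $\min_Y \|LY - PA\|_F$; hence
\begin{equation*}
\|LL^\dagger PA - PA\|_F \;=\; \min_{Y} \|LY - PA\|_F \;\le\; \min_{\rank X = r} \|PA\Omega^* X - PA\|_F \;=\; \min_{\rank X = r} \|A\Omega^* X - A\|_F \;\le\; (1+\varepsilon)\Delta_r(A),
\end{equation*}
where the middle equality holds because the permutation $P$ preserves the Frobenius norm. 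This chain uses only the unconditional inclusion $\im(PA\Omega^*) \subseteq \im(L)$---exactly the point you correctly identified---but it never mentions $Q$ at all, which is how the rank issue genuinely disappears.
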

\begin{proof}
    The proof is the same as that of Theorem \ref{thm:A_QQA}. The reason that the same proof works is that $\im \, L = \im \, Q$.
\end{proof}

\begin{algorithm}[H]
    \caption{Sparse Randomized LU Decomposition}
    \textbf{Input:} $A$ matrix of size $m \times n$ to decompose; approximation rank $r < n$; $k_1 < l_1 < k_2 < l_2$ number of columns to use in the projections and the size of output matrices.\\
    \textbf{Output:} Matrices $P,Q,L,U$ such that $\Vert PAQ-LU\Vert_F \le \mathcal{O}(\Delta_r(A))$, where $P$ and $Q$
    are orthogonal permutation matrices, $L$ and $U$ are lower and upper triangular matrices, respectively.
    \begin{algorithmic}[1]
        \STATE Create a random SEM $S_1\in M_{l_1 \times n}$ and an SRFT matrix $\Pi_1\in M_{k_1 \times l_1}$. Let $\Omega_1 = \Pi_1 S_1$ be of size $k_1 \times n$.
        \STATE Compute $B = A\Omega_1^*$ ($B\in M_{m\times k_1}$).
        \STATE Compute the LU decomposition of $B$: $PB = L_1U_1$, where $L_1\in M_{m\times k_1}$ is a lower triangular matrix and   $U_1\in M_{k_1\times k_1}$ is an upper triangular matrix.
        \STATE Create a random SEM $S_2$ of size $l_2 \times m$ and an SRFT matrix $\Pi_2\in M_{k_2 \times l_2}$. Let $\Omega_2 = \Pi_2 S_2$ be of size $k_2 \times m$.
        \STATE Compute $\Omega_2 L_1$ and $(\Omega_2 L_1)^\dagger$.
        \STATE Compute the LU decomposition with right partial pivoting of $(\Omega_2 L_1)^\dagger \Omega_2 P A$ such that $(\Omega_2 L_1)^\dagger \Omega_2 P AQ =
        \tilde{L}U$.
        \STATE $L \gets L_1\tilde{L}$.
        \STATE Return $L,U,P,Q$ 
    \end{algorithmic}
    \label{alg:sparse_randomized_LU_2}
\end{algorithm}

\begin{theorem}[Correctness of the algorithm]
    Let $A$ be an $m \times n$ matrix. The sparse randomized LU decomposition of $A$ uses the integers
     $k_1 = \mathcal{O}(r\log(r)), k_2 = \mathcal{O}(r), l_1 = \mathcal{O}(r^2\log^6(r)), l_2 = \mathcal{O}(r^2)$.
      Application of Algorithm \ref{alg:sparse_randomized_LU_2} gives $PAQ \approx LU$, where $P$ and $Q$ are permutation matrices, and $L$ and $U$ are lower and upper triangular matrices,
      respectively. Then, the approximation error from the application of the sparse randomized LU decomposition is bounded by $\|LU - PAQ\|_F \leq \mathcal{O}(\Delta_r)$ with high probability.
\end{theorem}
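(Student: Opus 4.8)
The plan is to unfold the two sketching stages of Algorithm \ref{alg:sparse_randomized_LU_2} and reduce the total error to a composition of two regression guarantees. First I would read off from Steps 6--7 that the returned factors satisfy $LU = L_1\tilde{L}U = L_1(\Omega_2 L_1)^\dagger \Omega_2 P A Q$. Since $Q$ is a permutation matrix (hence orthogonal), right-multiplication by $Q$ preserves the Frobenius norm, so
\begin{equation}\label{eq:reduce_to_proj}
\|LU - PAQ\|_F = \| L_1(\Omega_2 L_1)^\dagger \Omega_2 PA - PA\|_F .
\end{equation}
This isolates the object to be bounded: $L_1(\Omega_2 L_1)^\dagger \Omega_2 PA$ is the sketched least-squares reconstruction of $PA$ from the column space of $L_1$, while the true projection error is the quantity we already control.

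For the first stage I would invoke Corollary \ref{col:A_LLA} with $\Omega = \Omega_1 = \Pi_1 S_1$ and parameters $l_1 = \mathcal{O}(r^2\log^6 r)$, $k_1 = \mathcal{O}(r\log r)$. Because $PA\Omega_1^* = L_1 U_1$ with $U_1$ square upper triangular, we have $\im L_1 = \im(PA\Omega_1^*)$, and the corollary yields $\|PA - L_1 L_1^\dagger PA\|_F < (1+\varepsilon)\Delta_r(A)$. Recognizing that $L_1 L_1^\dagger PA$ is exactly the orthogonal projection of $PA$ onto $\im L_1$, this says that the optimal (unsketched) regression residual is small:
\begin{equation}\label{eq:stage1}
\min_X \|L_1 X - PA\|_F = \|L_1 L_1^\dagger PA - PA\|_F < (1+\varepsilon)\Delta_r(A).
\end{equation}

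For the second stage I would observe that $\tilde X := (\Omega_2 L_1)^\dagger \Omega_2 PA = \argmin_X \|\Omega_2(L_1 X - PA)\|_F$ is precisely the sketch-and-solve solution of the regression $\min_X \|L_1 X - PA\|_F$ with the combined sketch $\Omega_2 = \Pi_2 S_2$. Applying the affine-embedding guarantee, Theorem \ref{thm:affine_sparse_embed}, with design matrix $L_1$ and right-hand side $PA$ then gives $\|L_1 \tilde X - PA\|_F \le (1+\varepsilon)\min_X \|L_1 X - PA\|_F$. Chaining this with \eqref{eq:stage1} and substituting into \eqref{eq:reduce_to_proj} yields
\begin{equation}\label{eq:final_bound}
\|LU - PAQ\|_F \le (1+\varepsilon)^2 \Delta_r(A) = \mathcal{O}(\Delta_r),
\end{equation}
which is the claim (treating $\varepsilon$ as a constant absorbed into the $\mathcal{O}$). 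The probabilistic statement follows by a union bound over the two independent sketching events, each succeeding with the fixed non-zero probability provided by Theorem \ref{thm:affine_sparse_embed} and amplifiable to high probability via the singular-value bounds of Corollary \ref{cor:sing_SU}.

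The step I expect to be the main obstacle is the parameter bookkeeping in the second stage. Theorem \ref{thm:affine_sparse_embed} requires the sketch dimensions to scale with the rank of the design matrix, and here that matrix is $L_1$, which has $k_1 = \mathcal{O}(r\log r)$ columns and generically rank $\Theta(k_1)$ rather than $r$. Taken literally this would force $l_2, k_2$ to carry extra polylogarithmic factors beyond the stated $\mathcal{O}(r^2)$ and $\mathcal{O}(r)$, so the delicate point is to argue that the effective rank governing the stage-two embedding is controlled by $r$ (equivalently, that the polylog overhead is absorbed into the $\mathcal{O}$-notation). Making this precise requires invoking the subspace-embedding property of Corollary \ref{cor:sing_SU} to guarantee that $\Omega_2 L_1$ has full column rank, so that $(\Omega_2 L_1)^\dagger$ acts as a genuine left inverse and the sketch-and-solve estimate is well defined; everything else is a routine composition of the two error bounds in \eqref{eq:stage1} and \eqref{eq:final_bound}.
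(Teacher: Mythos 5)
Your reduction of $\|LU - PAQ\|_F$ to $\|L_1(\Omega_2 L_1)^\dagger\Omega_2 PA - PA\|_F$ and your first stage (Corollary \ref{col:A_LLA} applied with $\Omega_1$) are exactly the paper's opening moves; where you genuinely diverge is the second stage. The paper does not invoke Theorem \ref{thm:affine_sparse_embed} a second time. Instead it splits
\begin{equation*}
L_1(\Omega_2 L_1)^{-1}\Omega_2 PA - PA \;=\; L_1(\Omega_2 L_1)^{-1}\Omega_2\left(PA - L_1L_1^\dagger PA\right) + \left(L_1L_1^\dagger PA - PA\right),
\end{equation*}
so that the stage-one residual is hit by the operator $L_1(\Omega_2 L_1)^{-1}\Omega_2$, and then bounds $\|L_1(\Omega_2 L_1)^{-1}\Omega_2\|_2 \le \|(\Omega_2 U)^{-1}\|_2\,\|\Omega_2\|_2$ through the SVD $L_1 = U\Sigma V^*$, controlling the two factors by Corollary \ref{cor:sing_SU} and Theorem \ref{thm:sing_vals_S}. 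You instead read $(\Omega_2 L_1)^\dagger\Omega_2 PA$ as a sketch-and-solve regression and reuse the affine-embedding guarantee with design matrix $L_1$ and right-hand side $PA$. Each route buys something: yours gives the dimension-free bound $(1+\varepsilon)^2\Delta_r$, whereas the paper's multiplicative constant contains the quantity $C$ of Theorem \ref{thm:sing_vals_S}, which grows roughly like $\sqrt{m/l_2}$, so the paper's $\mathcal{O}(\Delta_r)$ hides a dimension-dependent factor; conversely, the paper's stage-two ingredients hold with high probability, while Theorem \ref{thm:affine_sparse_embed} is stated only with a fixed non-zero success probability, so your route pays that weak guarantee twice (once inside Corollary \ref{col:A_LLA}, and again in stage two), and your closing remark that Corollary \ref{cor:sing_SU} ``amplifies'' this to high probability is not an argument --- full column rank of $\Omega_2 L_1$ makes $\tilde{X}$ well defined, but it does nothing to boost the $(1+\varepsilon)$ regression guarantee. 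As for the obstacle you single out --- that the stage-two sketch sizes must scale with $\rank L_1 = k_1 = \mathcal{O}(r\log r)$ rather than with $r$ --- you are right that your write-up leaves it unresolved, but it is not specific to your route: the paper's own proof applies Corollary \ref{cor:sing_SU} to the left singular vectors $U \in M_{m\times k_1}$ of $L_1$ while writing the conditions on $l_2$ and $k_2$ in terms of $r$, silently committing the same $r$-versus-$k_1$ substitution (indeed, the theorem's claim $k_2 = \mathcal{O}(r)$ already sits uneasily with the algorithm's requirement $k_2 > k_1$). So your proposal is a sound alternative at the same level of rigor as the paper, trading the paper's dimension-dependent constant for a weaker probability guarantee.
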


\begin{proof}
Choose $0 < \varepsilon < 1$ ($\varepsilon$ affects the error of the
decomposition) and $0 < \delta < 1$ ($\delta$ affects the
probability that the decomposition is accurate). According to
Algorithm \ref{alg:sparse_randomized_LU_2}, $\Omega_1=\Pi_1 S_1 \in
M_{k_1\times n}$ where $\Pi_1 \in M_{k_1 \times l_1}$ is an SRFT
matrix and $S_1$ is a random SEM. The pivoted LU decomposition of
$B$ is given by $PB=L_1U_1$. Let $ k_1 =
\mathcal{O}(r\varepsilon^{-1}\log(r/\varepsilon))$ and $l_1 =
\mathcal{O}(r^2\log^6(r/\varepsilon)+ r\varepsilon^{-1})$.  Then
from Corollary \ref{col:A_LLA} it follows that $\|PA-L_1L_1^\dagger
PA\|_F<(1+\varepsilon)\Delta_r$. Let
$$
l_2 \geq \delta^{-1}(r^2+r)/(2\varepsilon-\varepsilon^2)^2, k_2 \geq 4\left[\sqrt{r} + \sqrt{8\log(rl_2)}\right]^2 \log r.
$$
Then, by Corollary \ref{cor:sing_SU}, with high probability,
$\Omega_2 L_1$ is left invertible. Thus,
$$\|L_1L_1^\dagger PA - PA\|_F = \|L_1(\Omega_2 L_1)^\dagger (\Omega_2 L_1)L_1^\dagger P A - P A\|_F. $$
Next, we bound $\|L_1(\Omega_2 L_1)^\dagger (\Omega_2 L_1)L_1^\dagger P A - P A\|_F$ by the following:

\begin{equation}\label{eq:bound1proof}
    \begin{split}
        \|L_1& (\Omega_2 L_1)^{-1}\Omega_2 PA- PA\|_F= \|L_1(\Omega_2 L_1)^{-1}\Omega_2 PA - L_1(\Omega_2 L_1)^{-1}(\Omega_2 L_1)L_1^\dagger PA \\
        &+L_1(\Omega_2 L_1)^{-1}(\Omega_2 L_1)L_1^\dagger PA - PA\|_F\\
        &=\|L_1(\Omega_2 L_1)^{-1}\Omega_2 (PA -L_1L_1^\dagger PA) + L_1L_1^\dagger PA - PA\|_F \\
        & \leq\|L_1(\Omega_2 L_1)^{-1}\Omega_2 (PA-L_1L_1^\dagger PA)\|_F  +  \|L_1L_1^\dagger PA - PA\|_F \\
        & \leq \|L_1(\Omega_2 L_1)^{-1}\Omega_2 \|_2\|PA-L_1L_1^\dagger PA\|_F +\|L_1L_1^\dagger PA - PA\|_F \\
        &=(\|L_1(\Omega_2 L_1)^{-1}\Omega_2 \|_2 + 1)\|PA-L_1L_1^\dagger PA\|_F.
     \end{split}
\end{equation}
Let $L_1 = U\Sigma V^*$ be the SVD of $L_1$, where $U\in M_{m\times
k_1}$,  $\Sigma \in M_{k_1\times k_1}$, and $V \in M_{k_1\times
k_1}$. Then
\begin{equation}\label{eq:bound2proof}
\begin{array}{lll}
    \|L_1(\Omega_2 L_1)^{-1}\Omega_2 \|_2 & = & \|U \Sigma V^*(\Omega_2  U \Sigma V^*)^{-1}\Omega_2 \|_2 \\
    & = & \|U \Sigma V^*(\Sigma V^*)^{-1}(\Omega_2 U )^{-1}\Omega_2 \|_2 \\
    & = & \|U (\Omega_2 U )^{-1}\Omega_2\|_2 \\
    & = & \| (\Omega_2 U )^{-1}\Omega_2\|_2 \\
    & \leq & \| (\Omega_2 U )^{-1}\|_2 \|\Omega_2\|_{2} .
\end{array}
\end{equation}
By combining Eqs. \eqref{eq:bound1proof} and \eqref{eq:bound2proof}
with Corollary \ref{cor:sing_SU} and Theorem \ref{thm:sing_vals_S},
we get
$$
\begin{array}{lll}
\|L_1(\Omega_2 L_1)^{-1}\Omega_2 PA- PA\|_F & \leq  &(\frac{C(n,k_2)}{0.4(1-\varepsilon)} + 1)\|PA-L_1L_1^\dagger PA\|_F \\ &\leq &1.48(1+\varepsilon)\left(\frac{C(n,k_2)}{0.4(1-\varepsilon)} + 1\right) \Delta_r.

\end{array}
$$
From Algorithm \ref{alg:sparse_randomized_LU_2}, obtain
$$\|LU - PAQ\|_F =\|L_1 \tilde{L}U Q^* - PA\|_F =\|L_1(\Omega_2 L_1)^{-1}\Omega_2 PA - PA\|_F \leq \mathcal{O}(\Delta_r),$$
which completes the proof.
\end{proof}

\subsection{Algorithm Complexity}

Denote $m,n,r,k_1, k_2, l_1, l_2$ as in Algorithm \ref{alg:sparse_randomized_LU_2}. Assume, without loss of generality, that $m \geq n$. Then
\begin{enumerate}
    \item $\Omega_1$ construction takes  $\mathcal{O}(n + l_1 k_1)$ operations.
    \item $B=A\Omega_1^*$ computation takes $\mathcal{O}(mn+ml_1\log(k_1))$ operations.
    \item Computation of the pivoted LU decomposition of $B$ takes $\mathcal{O}(mk_1^2)$ operations.
    \item $\Omega_2$ construction takes  $\mathcal{O}(m + l_2 k_2)$ operations.
    \item $\Omega_2 L_1$ and $(\Omega_2 L_1)^{\dagger}$ computation takes $\mathcal{O}(m k_1 + k_1 l_2 \log(k_2))$ and $\mathcal{O}(k_2k_1^2)$ operations respectively.
    \item $(\Omega_2 L_1)^{\dagger}\Omega_2 PA $ computation takes $\mathcal{O}(mn+nl_2\log(k_2)+k_2k_1n)$ operations.
    \item LU decomposition of  $(\Omega_2 L_1)^{\dagger}\Omega_2 PA$ takes $\mathcal{O}(k_2^2n)$ operations.
    \item $L = L_1\tilde{L}$ computation takes $\mathcal{O}(mk_1^2)$ operations.
\end{enumerate}
This sums up to a total complexity of
$$
\mathcal{O}\left(mn  +  mk_1^2  +  nk_2^2  +  m l_1\log(k_1)  +  nl_2\log(k_2)  +  k_1l_2\log(k_2)  \right),
$$
and the complexity of the decomposition of a sparse matrix $A$ is 
$$
\mathcal{O}\left(\nnz(A)  +  mk_1^2  +  nk_2^2  +  m l_1\log(k_1)  +  nl_2\log(k_2)  +  k_1l_2\log(k_2)  \right).
$$

\section{Numerical Results}
\label{sec:numerical_results} In this section, the performance of
the algorithm is evaluated. The algorithm is implemented in MATLAB
using complex matrices. The Sub-sampled Randomized Hadamard Transform (SRHT) \cite{tropp2011improved} is used with real matrices instead of using the SRFT
matrix to achieve an efficient computation.

\subsection{Numerical rank growth}
In this experiment, we consider a matrix of size $n = 5000$ where
its  numerical rank changes between 50 to 900, i.e., the first $r$
singular values are 1 and the other are exponentially decaying from
$e^{-10}$ to $e^{-200}$. As shown in Figure \ref{fig:const_n__step_sigma}, Algorithm \ref{alg:sparse_randomized_LU_2} results in an approximation of
the same order as the numerical rank, up to a small error.

    \begin{figure}[H]
        \makebox[\textwidth][c]{
	        \begin{subfigure}[b]{0.6\textwidth}
	            \includegraphics[width=1\textwidth]{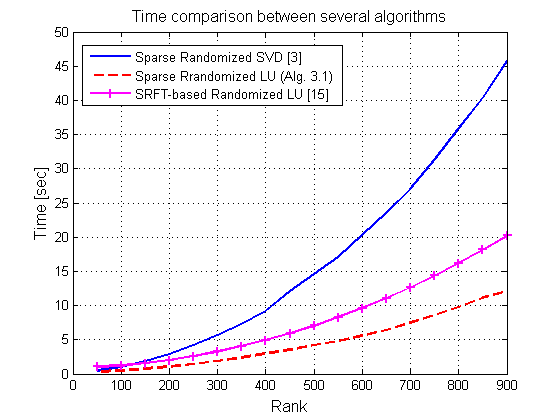}
	            \caption{}
	            \label{fig:const_n__step_sigma_time}
	        \end{subfigure}%
	        ~
	        \begin{subfigure}[b]{0.6\textwidth}
	            \includegraphics[width=1\textwidth]{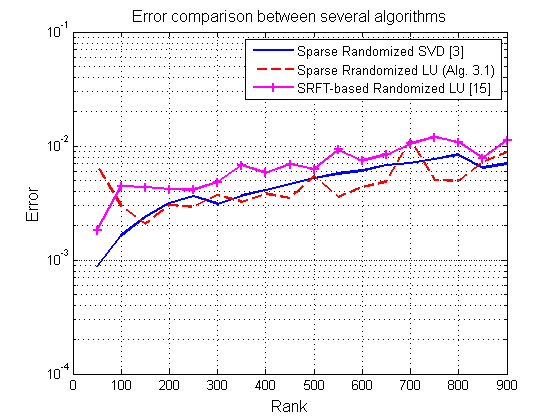}
	            \caption{}
	            \label{fig:const_n__step_sigma_err}
	        \end{subfigure}
	    }

        \caption{Results from the approximation of a matrix of size $5000 \times 5000$ with different numerical ranks.
         The numerical rank is shown on the x-axis. (a) the y-axis denotes the time  each algorithm takes. (b) the y-axis denotes the error of each algorithm. }
        \label{fig:const_n__step_sigma}
    \end{figure}

\subsection{Improving the accuracy for a fixed matrix}
In this experiment, we consider a matrix of size $n = 5000$ with singular values that decay exponentially from 1 to $e^{-100}$.
 We compute the $r$-th rank approximation by increasing $r$.

    \begin{figure}[H]
        \makebox[\textwidth][c]{
		    \begin{subfigure}[b]{0.6\textwidth}
			    \includegraphics[width=1\textwidth]{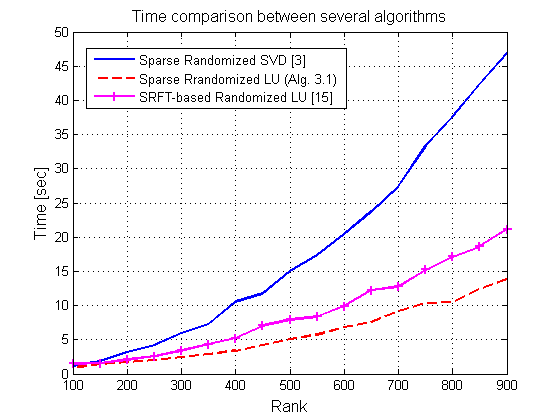}
	            \caption{}
	            \label{fig:const_n__const_sigma_time}
	        \end{subfigure}%
	        ~
	        \begin{subfigure}[b]{0.6\textwidth}
			    \includegraphics[width=1\textwidth]{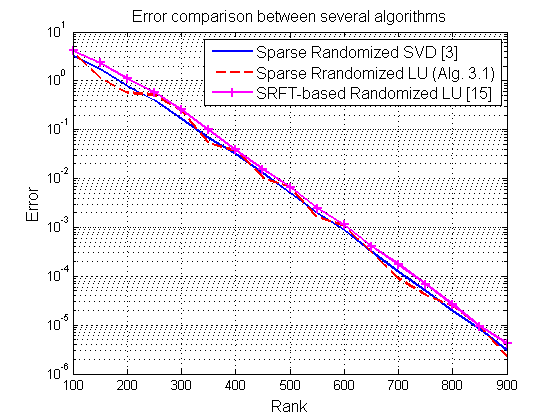}
	            \caption{}
	            \label{fig:const_n__const_sigma_err}
	        \end{subfigure}
	    }
        \caption{Results from the  approximation a matrix of size $5000 \times 5000$ with exponentially decaying singular values.
        The approximation rank is shown on the x-axis. (a) the y-axis denotes the time each algorithm takes. (b) the y-axis denotes the error of each algorithm.}
        \label{fig:const_n__const_sigma}
    
    \end{figure}

\subsection{Running on GPU}
The Sparse randomized LU decomposition (Algorithm \ref{alg:sparse_randomized_LU_2}) can be
fully parallelized to run efficiently on a GPU card and on a
distributed computing system such as Hadoop or Spark. In the following test, a $5000 \times 5000$
random matrix was processed in double precision on a GPU card using
the MATLAB's GPU interface. MATLAB 2015a enables us to apply 
certain sparse matrices operations to the GPU. GTX Titan Black GPU card was used. Figure \ref{fig:gpu_vs_cpu} compares the running
time between GPU and CPU.

\begin{figure}[H]
    \centering
    \includegraphics[width=0.75\textwidth]{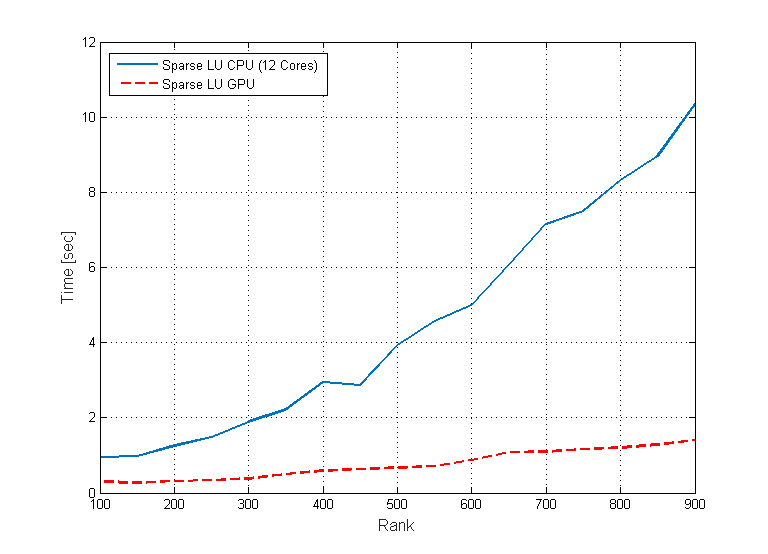}
    \caption{Running time on GPU vs. CPU of the randomized sparse LU decomposition (Alg. \ref{alg:sparse_randomized_LU_2})}
    \label{fig:gpu_vs_cpu}
\end{figure}

\section*{Conclusion}
\label{sec:conclusion} In this paper, the Sparse--Randomized--LU
algorithm is presented. This algorithm utilizes sparse random
projections that are combined with FFT--based projections for
computing low rank LU matrix decompositions. The proposed technique
was analyzed theoretically to achieve asymptotic bounds. The
conducted numerical experiments  compare the performance of the
algorithm to other  algorithms such as sparse SVD and fast
randomized LU.
\section*{Acknowledgment}
This research was partially supported by the Israeli Ministry of
Science \& Technology (Grants No. 3-9096, 3-10898), US-Israel
Binational Science Foundation (BSF 2012282), Blavatnik Computer
Science Research Fund and Blavatink ICRC Funds.

\bibliographystyle{siam}
\bibliography{SparseRandomizedLU}
\end{document}